\renewcommand*\env@matrix[1][*\c@MaxMatrixCols c]{%
  \hskip -\arraycolsep
  \let\@ifnextchar\new@ifnextchar
  \array{#1}}
\newenvironment{customthm}[1]
  {\innercustomthm}
  {\endinnercustomthm}
\newtheorem{dfn}{Definition}[section]
\newtheorem{thrm}[dfn]{Theorem}
\newtheorem{lm}[dfn]{Lemma}
\newtheorem{cor}[dfn]{Corollary}
\newcommand{\rem}{\noindent{\bf Remark.  }}
\numberwithin{equation}{section}
\title
[An Ambarzumian type theorem on graphs]{Spectral determinants and an Ambarzumian type theorem on graphs}
\author{M\'arton Kiss}
\thanks{2010. Math. Subject Classification: Primary 34A55, 34B20, 34B24, 34B45;
Secondary 34L40, 47A75\newline Key words and phrases: Ambarzumian, inverse problems, inverse eigenvalue problem, differential equations on graphs, quantum graphs, spectral determinant, Matrix Tree Theorem}
\address
{Department of Differential Equations
\newline
\indent Institute of Mathematics
\newline
\indent Budapest University of Technology and Economics
\newline
\indent H 1111 Budapest, M\H{u}egyetem rkp. 3-9.}
\email{mkiss@math.bme.hu}
\begin{document}

\begin{abstract}
We consider an inverse problem for Schr\"odinger operators on connected equilateral graphs with standard matching conditions. We calculate the spectral determinant and prove that the asymptotic distribution of a subset of its zeros can be described by the roots of a polynomial. We verify that one of the roots is equal to the mean value of the potential and apply it to prove an Ambarzumian type result, i.e., if a specific part of the spectrum is the same as in the case of zero potential, then the potential has to be zero.
\end{abstract}
\maketitle
\begin{section}{Introduction}

Quantum graphs arise naturally as simplified models in mathematics, physics, chemistry, and engineering \cite{BerkolaikoKuchment2013book}. Ambarzumian's theorem in inverse spectral theory refers to a setting when a differential operator can be reconstructed from at most one spectrum due to the presence of a constant eigenfunction. The original theorem from 1929 states for $q\in C[0,\pi]$ that if the eigenvalues of
\begin{align}%
\left.
\begin{array}{cc}
-y''+q(x)y=\lambda y\\
y'(0)=y'(1)=0       
\end{array}
\right\}
\end{align}
are $\lambda_n=n^2\pi^2$ ($n\ge 0$), then $q=0$ \cite{A}. Eigenvalues other than zero are used only through eigenvalue asymptotics to get $\int_0^{1}q=0$; hence a subsequence $\lambda_r=r^2\pi^2+o(1)$ of them is sufficient to reach the same conclusion even if $q\in L^1(0,\pi)$. On finite intervals inverse eigenvalue problems have a vast literature. In general we mention \cite{Horvath2005Annals} and the basic paper \cite{B1} as well as the works referenced by and referencing these. For Ambarzumian's theorem a recent stability result is found in \cite{Horvath2015}. Let us turn to the list of extensions to graphs. On a tree with edges of equal length knowing the smallest eigenvalue $0$ exactly and a specific part of the spectrum approximately is enough for recover to the zero potential \cite{CarlsonPivovarchik2007ambarzumian}. On a tree with different edge lengths this is still true (see Lemma 4.4 of \cite{LawYanagida2012}), however, the required set of eigenvalues is given by an existence proof.
On the other hand, having information about the entire spectrum allows applying a trace formula, even if a detailed description of the structure of the spectrum is not available. The paper \cite{Davies2013} uses heat extension in an abstract framework; this allows arbitrary graphs with arbitrary edge lengths at the expense of requiring information from the entire spectrum. %

For a summary on differential operators on graphs, see \cite{PokornyiBorovskikh2004,Kuchment2008}. %

In this paper we consider a connected graph $G(V,E)$ with edges of equal length. The graph can contain loops and multiple edges. We parametrize each edge with $x\in(0,1)$. This gives an orientation on $G$. We consider a Schr\"odinger operator with potential $q_j(x)\in L^1(0,1)$ on the edge $e_j$ and with Neumann (or Kirchhoff) boundary conditions (sometimes called standard matching conditions), i.e., solutions are required to be continuous at the vertices and, in the local coordinate pointing outward, the sum of derivatives is zero. 
More formally, consider the eigenvalue problem
\begin{align}\label{sch}
-y''+q_j(x)y=\lambda y
\end{align}
on $e_j$ for all $j$ with the conditions
\begin{align}\label{continuity}
y_j(\kappa_j)=y_k(\kappa_k)
\end{align}
if $e_j$ and $e_k$ are incident edges attached to a vertex $v$ where $\kappa=0$ for outgoing edges, $\kappa=1$ for incoming edges (and can be both $0$ or $1$ for loops); and in every vertex $v$
\begin{align}\label{Kirchhoff}
\sum_{e_j\textrm{ leaves }v} y_j'(0)=\sum_{e_j\textrm{ enters }v} y_j'(1)
\end{align}
(loops are counted on both sides).

The \emph{spectral determinant} or alternatively \emph{functional determinant} or \emph{characteristic function} of the problem (\ref{sch})-(\ref{Kirchhoff}) is a meromorphic function whose zeros coincide with its spectrum. Spectral determinants have been subject to continuous attention in the theoretical physics literature for the last twenty years \cite{CurrieWatson2005, KacPivovarchik2011, CarlsonPivovarchik2008SpectralAsymptotics, Pankrashkin2006, AkkermansComtetDesboisMontambauxTexier2000, Desbois2000, Friedlander2006, HarrisonKirstenTexier2012, Texier2010}. As a tool for proving our Ambarzumian type results, we give a formula for the spectral determinant of Schr\"odinger operators (see (\ref{detM})) and for the asymptotic distribution of some of its zeros. It is already known that for finite connected graphs, the main term of the eigenvalue asymptotics can be obtained from Weyl's law and the next terms depend on complicated combinations of $\int_0^1q_j$, $j=1,\ldots,|E|$ (\cite{MollerPivovarchik2015book}, p. 213). We express some of these combinations as the roots of a polynomial (see (\ref{eigpol})) and prove that for \emph{any} connected, equilateral graph there is a root equal to the mean value of the potential. For the convenience of the reader, we emphasize this result in the context of spectral determinants:

\begin{customthm}{1.1'}\label{spectraldet}
Consider a connected graph $G(V,E)$ with edges of equal length. The spectral determinant of Schr\"odinger operators on $G$ with standard matching conditions has a sequence of roots which asymptotically differ by the mean value of the potential from the corresponding sequence of roots of the spectral determinant of the free Schr\"odinger operator. Precisely, the problem (\ref{sch})-(\ref{Kirchhoff}) has a sequence of eigenvalues $\lambda_k=(2k\pi)^2+\frac{1}{|E|}\sum_{j}\int_0^{1}q_j+o(1)$ ($k\in\mathbb{Z}^+$). Moreover, if $G$ is a bipartite graph, the problem (\ref{sch})-(\ref{Kirchhoff}) has a sequence of eigenvalues $\lambda_k=(k\pi)^2+\frac{1}{|E|}\sum_{j}\int_0^{1}q_j+o(1)$.
\end{customthm}

\rem
It would be interesting to prove Theorem \ref{spectraldet} in the case of possibly different edge lengths and to describe the asymptotic distribution of the eigenvalues in question.

For our Ambarzumian type result, we need an improved version of this theorem:

\begin{thrm}\label{equallengthdirect}
Consider the eigenvalue problem (\ref{sch})-(\ref{Kirchhoff}). There are exactly $|E|-|V|+2$ eigenvalues (counting multiplicities) such that $\lambda=(2k\pi)^2+O(1)$ as $k\to\infty$ ($k\in\mathbb{Z}^+$). Among these eigenvalues at least one has the asymptotics $\lambda=(2k\pi)^2+\frac{1}{|E|}\sum_{j}\int_0^{1}q_j+o(1)$. Moreover, if $G$ is a bipartite graph, the same is true for $k$ instead of $2k$, i.e., there are exactly $|E|-|V|+2$ eigenvalues (counting multiplicities) such that $\lambda=(k\pi)^2+O(1)$ and at least one has the asymptotics $\lambda=(k\pi)^2+\frac{1}{|E|}\sum_{j}\int_0^{1}q_j+o(1)$.
\end{thrm}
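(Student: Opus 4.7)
The plan is to refine the asymptotic analysis of the spectral determinant $\det M(\lambda)$ from (\ref{detM}) around $\lambda = (2k\pi)^2$ for large $k$ and apply a Rouch\'e-type counting argument. Writing $\rho = \sqrt\lambda$ and substituting $\lambda = (2k\pi)^2 + \mu$ with $\mu$ bounded, the standard asymptotic expansions of the fundamental solutions of $-y''+q_j y = \lambda y$ give
\[ c_j(1,\lambda) = \cos\rho + \frac{\sin\rho}{2\rho}\int_0^1 q_j + O(\rho^{-2}), \quad s_j(1,\lambda) = \frac{\sin\rho}{\rho} + O(\rho^{-2}), \]
and analogous formulas for the derivatives at $x=1$. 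At $\rho = 2k\pi$ one has $\cos\rho = 1$ and $\sin\rho = 0$, so $M(\lambda)$ reduces to a purely combinatorial matrix $M_0$ plus an $O(1/k)$ correction. The kernel of $M_0$ is then readily identified: the boundary data collapse to $y_j(0) = y_j(1) = A_j$ and $y_j'(0) = y_j'(1) = B_j$, continuity together with connectedness of $G$ forces $(A_j)$ to be a single scalar (one dimension), and the Kirchhoff conditions place $B = (B_j)$ in the cycle space of $G$ (dimension $|E| - |V| + 1$), yielding $\dim\ker M_0 = |E| - |V| + 2$.

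Next, expanding $\det M((2k\pi)^2 + \mu)$ in $\mu$ produces a polynomial of degree $|E| - |V| + 2$ (essentially (\ref{eigpol})) plus an $O(1/k)$ perturbation, and a Rouch\'e argument in a fixed disk $|\mu|\le R$ yields exactly $|E| - |V| + 2$ zeros of $\det M(\lambda)$ in the disk for all sufficiently large $k$, which gives the multiplicity count. To isolate the eigenvalue with the mean-potential asymptotic, I would use the trial function $y_j(x) = \cos(2k\pi x)$ on every edge: it satisfies continuity ($y_j(0) = y_j(1) = 1$) and Kirchhoff ($y_j'(0) = y_j'(1) = 0$) exactly, and by the Riemann--Lebesgue lemma,
\[ R(y) = \frac{\sum_j \int_0^1 \left[(y_j')^2 + q_j y_j^2\right]}{\sum_j \int_0^1 y_j^2} = (2k\pi)^2 + \frac{1}{|E|}\sum_j \int_0^1 q_j + o(1). \]
Since the trial function $\cos(2k\pi x)$ is exactly orthogonal (for each $k$) to the cycle-mode eigenfunctions of the free operator (which are concentrated in $\sin(2k\pi x)$), first-order degenerate perturbation theory inside the $(|E|-|V|+2)$-dimensional free cluster decouples the constant mode from the cycle modes, forcing one of the cluster eigenvalues to sit at $(2k\pi)^2 + \frac{1}{|E|}\sum_j\int_0^1 q_j + o(1)$.

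The bipartite case is handled identically at $\lambda = ((2k+1)\pi)^2$: here $\cos\rho = -1$, so continuity forces $\phi_u = -\phi_v$ on adjacent vertices, which is consistent precisely because $G$ is bipartite, giving again a one-dimensional constant mode $\phi|_{V_0} = +c$, $\phi|_{V_1} = -c$. The $B$-part of the analysis is analogous, yielding the same $\dim\ker M_0 = |E| - |V| + 2$, and the signed trial function $y_j(x) = \sigma_j \cos((2k+1)\pi x)$, with $\sigma_j \in \{\pm 1\}$ chosen according to the bipartite partition, produces the same mean-potential asymptotic. The main technical obstacle will be making the Rouch\'e step precise and uniform in $k$ --- specifically, verifying that the polynomial part of $\det M((2k\pi)^2+\mu)$ really has a zero of order exactly $|E|-|V|+2$ at $\mu=0$ and that no additional zeros of $\det M$ appear in the annulus between the Rouch\'e disk and the $O(1)$-window required by the theorem --- together with confirming via the asymptotic decoupling of the constant and cycle modes that the Rayleigh quotient value is attained by an eigenvalue inside the cluster rather than by some unrelated eigenvalue far from $(2k\pi)^2$.
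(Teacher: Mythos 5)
Your skeleton (asymptotics of $\det M$ near $(2k\pi)^2$, the $(|E|-|V|+2)$-dimensional kernel of the free problem, a counting step, and an identification of the mean-value root) is the same as the paper's, and your computation of $\ker M_0$ as (constants) $\oplus$ (cycle space) is correct. But the two steps that carry the actual content each have a genuine gap. For the mean-value eigenvalue: the Rayleigh quotient of the trial function $\cos(2k\pi x)$ indeed equals $(2k\pi)^2+\frac{1}{|E|}\sum_j\int_0^1 q_j+o(1)$, but this alone locates no eigenvalue, because the residual $\|(L-R(y))y\|/\|y\|$ is essentially $\|(q-\frac{1}{|E|}\sum_j\int_0^1 q_j)\cos(2k\pi x)\|$, which is only $O(1)$, not $o(1)$; so everything rests on your invocation of ``first-order degenerate perturbation theory inside the cluster.'' That invocation is precisely the nontrivial claim that the cluster eigenvalues agree up to $o(1)$ with the eigenvalues of the compression of $q$ to the free eigenspace --- you assert it but do not prove it, and since $q\in L^1$ is not a small (or even bounded) perturbation in any norm that makes textbook degenerate perturbation theory apply directly as $k\to\infty$, it needs an argument. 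The paper gets this conclusion for free from the object you already have: in (\ref{eigpol}) every term of the sum over spanning trees carries the factor $\sum_j\int_0^1 q_j-|E|d$, so $d=\frac{1}{|E|}\sum_j\int_0^1 q_j$ is manifestly a root of $p$. Since you are already expanding $\det M$ into (essentially) $p$ for your Rouch\'e step, you should simply read the root off $p$ and drop the trial-function machinery.

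For the multiplicity count: your Rouch\'e argument counts zeros of the analytic function $\det M$ with their analytic order, while the theorem counts eigenvalues with their (geometric $=$ algebraic, by self-adjointness) multiplicities. The identification of the order of vanishing of this particular spectral determinant with the eigenvalue multiplicity is a nontrivial fact that you do not address (this is the subject of \cite{KacPivovarchik2011}); without it, Rouch\'e gives you the number of zeros of $\det M$, not the number of eigenvalues. You would also need to note that the leading coefficient of $p$ is $-|E|$ times the number of spanning trees, hence nonzero, so that $p$ has exact degree $|E|-|V|+2$ (otherwise the Rouch\'e count is wrong); the annulus issue you flag is, by contrast, harmless, since the Corollary's asymptotics hold uniformly for bounded $d$ and $p$ has no roots outside a fixed disk. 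The paper avoids the order-versus-multiplicity question entirely by a different mechanism: it computes $\dim\ker M=|E|-|V|+2$ exactly at $q=0$, $\lambda=k^2\pi^2$, and then deforms via the self-adjoint holomorphic family $-y''+tq_jy$, $t\in[0,1]$, using the uniform bound $|\lambda_n'(t)|\le\|g_n\|_\infty^2\|q\|_1=O(1)$ to conclude that the number of eigenvalues in an $O(1)$-window of $k^2\pi^2$ is independent of $q$. To complete your route you must either supply the order-equals-multiplicity lemma for $\det M$ or switch to such a deformation argument.
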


\rem
Although $G$ is a directed graph, if we reverse an edge $e_j$ and change the potential to $q_j(1-x)$ on it, we get an eigenvalue problem with the same eigenvalues and eigenfunctions (which are reversed with respect to the new direction on $e_j$).

\rem
For $q=0$ and $\lambda=2k^2\pi^2$ ($k\in\mathbb{Z}^+$)
one eigenfunction is $\cos{2k\pi x}$ and in each circle there is a Dirichlet eigenfunction $\pm\sin{2k\pi x}$ on its edges (depending on the direction) and $0$ everywhere else. In the bipartite case we can assume that $V$ is a disjoint union of $V_1$ and $V_2$, and that every edge points from $V_1$ to $V_2$. %
Then for $k$ odd, we can take $\cos{k\pi x}$ on all edges; besides, there are Dirichlet eigenfunctions, $\pm\sin{k\pi x}$ alternately on the edges of a circle and $0$ elsewhere. Hence in both cases there are $|E|-|V|+2$ independent eigenfunctions. We shall prove that the multiplicity is not greater.

\begin{thrm}\label{equallengthinverse}
Consider the eigenvalue problem (\ref{sch})-(\ref{Kirchhoff}). If $\lambda=0$ is the smallest eigenvalue and for infinitely many $k\in\mathbb{Z}^+$ there are $|E|-|V|+2$ eigenvalues (counting multiplicities) such that $\lambda=(2k\pi)^2+o(1)$, then $q=0$ a.e. on $G$. Moreover, if $G$ is a bipartite graph, the same is true for $k$ instead of $2k$, i.e., if $\lambda=0$ is the smallest eigenvalue and for infinitely many $k\in\mathbb{Z}^+$ there are $|E|-|V|+2$ eigenvalues (counting multiplicities) such that $\lambda=(k\pi)^2+o(1)$, then $q=0$ a.e. on $G$.
\end{thrm}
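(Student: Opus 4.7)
The plan is to split the argument into two stages. First, I use Theorem \ref{equallengthdirect} to show that the spectral hypothesis forces the mean value of the potential to vanish. Second, I combine this with the assumption $\lambda_0=0$ in the classical Ambarzumian fashion via the Rayleigh quotient, which immediately gives $q=0$ a.e.

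For the first stage, Theorem \ref{equallengthdirect} guarantees that for every $k\in\mathbb{Z}^+$ there are exactly $|E|-|V|+2$ eigenvalues (with multiplicity) satisfying $\lambda=(2k\pi)^2+O(1)$, and that at least one of them obeys the refined asymptotics $\lambda=(2k\pi)^2+\frac{1}{|E|}\sum_j\int_0^{1}q_j+o(1)$. By hypothesis, for infinitely many $k$ this entire $O(1)$-cluster collapses to $\lambda=(2k\pi)^2+o(1)$; in particular the distinguished eigenvalue collapses, so comparing the two asymptotics along this subsequence forces
\[\frac{1}{|E|}\sum_j\int_0^{1}q_j=0.\]

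For the second stage, the Schr\"odinger operator with standard matching conditions is self-adjoint and is generated by the quadratic form
\[Q[y]=\sum_j\int_0^{1}\bigl(|y_j'|^2+q_j|y_j|^2\bigr)\,\ed x\]
on the space of continuous $H^1$-functions on $G$ (continuity at the vertices being exactly (\ref{continuity}); the Kirchhoff condition (\ref{Kirchhoff}) is the natural boundary condition of $Q$). The constant function $y\equiv 1$ trivially meets the continuity requirement at every vertex, so it lies in the form domain, and a direct computation gives $Q[1]=\sum_j\int_0^{1}q_j=0$ together with $\|1\|^2=|E|$, hence Rayleigh quotient equal to $0$. Since by hypothesis $\lambda_0=0$ is the infimum of the Rayleigh quotient, the constant is a minimizer and therefore a ground-state eigenfunction for $\lambda=0$. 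Substituting $y_j\equiv 1$ into (\ref{sch}) yields $q_j(x)=0$ a.e.\ on each edge $e_j$. The bipartite claim follows by the same argument with $(2k\pi)^2$ replaced by $(k\pi)^2$, invoking the bipartite half of Theorem \ref{equallengthdirect}.

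The real work is concentrated in the first stage, and it is entirely imported from Theorem \ref{equallengthdirect}: what makes the argument succeed is the sharp count ``exactly $|E|-|V|+2$'' together with the guarantee that at least one eigenvalue in this cluster carries the mean-of-$q$ correction. Any laxer version of Theorem \ref{equallengthdirect} (e.g.\ allowing spurious extra eigenvalues in the cluster, or failing to locate one with the correct first correction) would sever the link between the hypothesis and the vanishing of $\frac{1}{|E|}\sum_j\int_0^{1}q_j$. Once that accounting is in hand, the Ambarzumian-style Rayleigh quotient step is short and standard.
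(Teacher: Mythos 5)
Your proposal is correct and follows essentially the same route as the paper: first deduce $\sum_j\int_0^1 q_j=0$ from Theorem \ref{equallengthdirect} combined with the hypothesis on the cluster of $|E|-|V|+2$ eigenvalues, then apply the Rayleigh-quotient argument with the constant test function to conclude that $1$ is a ground state and hence $q=0$ a.e. The only difference is that you spell out the comparison of asymptotics and the form-domain details more explicitly than the paper does.
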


\rem
For a tree $|E|-|V|+2=1$, and a tree is bipartite, hence Theorem \ref{equallengthinverse} is a generalization of Theorem 1.2 in \cite{CarlsonPivovarchik2007ambarzumian}.

If the graph represents an electrical circuit in which each edge has a unit resistance, the \emph{effective resistance} of an edge can be computed (or in mathematics, defined) by terms of the graph Laplacian. A result of Kirchhoff \cite{Kirchhoff1847} is that the effective resistance of an edge $e$ can be expressed as the number of spanning trees containing $e$ divided by the number of all spanning trees.
\begin{thrm}\label{equallengthequalresistances}
Consider the eigenvalue problem (\ref{sch})-(\ref{Kirchhoff}). If every non-loop edge of $G$ has the same effective resistance $r<1$, the multiplicities required by Theorem \ref{equallengthinverse} can be weakened to $|E|-|V|+1$.
\end{thrm}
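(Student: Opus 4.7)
The plan is to reduce the theorem to establishing $\bar q := \tfrac{1}{|E|}\sum_j \int_0^1 q_j = 0$ under the weakened multiplicity hypothesis; once $\bar q = 0$ is available, the Ambarzumian argument already present in the proof of Theorem \ref{equallengthinverse} concludes, since the constant function then attains the Rayleigh quotient $\bar q = 0 = \lambda_0$ and hence serves as a ground-state eigenfunction at $\lambda_0 = 0$, so substituting into (\ref{sch}) forces $q \equiv 0$ a.e.

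By Theorem \ref{equallengthdirect} there are exactly $|E|-|V|+2$ eigenvalues of the form $(2k\pi)^2 + O(1)$ (respectively $(k\pi)^2 + O(1)$ in the bipartite case), and as $k \to \infty$ their $O(1)$ shifts converge to the roots of the polynomial (\ref{eigpol}). First-order perturbation theory applied to the unperturbed eigenfunctions from the remark preceding Theorem \ref{equallengthinverse}, namely the vertex-constant $\cos 2k\pi x$ and the cycle functions $\pm \sin 2k\pi x$, identifies these roots with the spectrum of the self-adjoint operator $\bar q \oplus (P_Z Q P_Z)|_Z$, where $Q = \mathrm{diag}(\int_0^1 q_j)$, $Z = \ker B \subset \mathbb{R}^{|E|}$ is the cycle space ($B$ the oriented incidence matrix, $\dim Z = |E|-|V|+1$), and $P_Z$ is orthogonal projection onto $Z$; the off-diagonal terms between the constant block and the cycle block vanish asymptotically by Riemann--Lebesgue. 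Loops correspond to coordinate vectors in $Z$, and the Matrix Tree Theorem yields the diagonal identity $(P_Z)_{jj} = 1 - r_j$ with $r_j$ the effective resistance of $e_j$ (in particular $r_j = 0$ for loops).

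Now assume that $|E|-|V|+1$ of the $|E|-|V|+2$ roots vanish. If the root $\bar q$ guaranteed by Theorem \ref{spectraldet} is one of them then $\bar q = 0$ directly; otherwise all $|E|-|V|+1$ eigenvalues of the symmetric operator $P_Z Q P_Z|_Z$ vanish, so $P_Z Q P_Z|_Z = 0$. Testing this zero operator on a loop coordinate vector $e_\ell \in Z$ yields $\int_0^1 q_\ell = 0$ for every loop; taking the trace on $Z$ and inserting the equal-resistance value gives
\[
0 = \mathrm{tr}_Z(P_Z Q P_Z) = \sum_j (1-r_j)\int_0^1 q_j = \sum_{e_\ell \text{ loop}}\int_0^1 q_\ell + (1-r)\sum_{e_j \text{ non-loop}}\int_0^1 q_j.
\]
The vanishing loop integrals together with $r < 1$ then force $\sum_j \int_0^1 q_j = 0$, i.e., $\bar q = 0$, contradicting the assumption that $\bar q$ was the remaining nonzero root. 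Hence $\bar q = 0$ in every case and the Ambarzumian step finishes the proof; the bipartite variant runs identically with $(k\pi)^2$ replacing $(2k\pi)^2$. The main technical obstacle is the perturbation-theoretic identification of the roots of (\ref{eigpol}) with the spectrum of $\bar q \oplus P_Z Q P_Z|_Z$, which should be extractable from a careful expansion of the spectral determinant formula (\ref{detM}) around $q = 0$.
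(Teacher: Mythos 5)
Your proposal is correct in substance and ends up extracting exactly the two facts the paper's proof uses, but it routes them through machinery the paper avoids, and the one step you flag as the ``main technical obstacle'' is precisely the step you never need. The paper works directly with the explicit polynomial (\ref{eigpol}): it factors $p(d)$ as the linear factor $\bigl(\sum_j\int_0^1 q_j-|E|d\bigr)$ times $g(d)=\sum_\tau\prod_{e_j\notin\tau}(d-\int_0^1q_j)$, pulls the loop factors $\prod_{\text{loops}}(d-\int_0^1q_j)$ out of the spanning-tree sum (loops lie in no spanning tree), and then, assuming $\sum_j\int_0^1q_j\ne0$ so that all $|E|-|V|+1$ vanishing roots must come from $g$, reads off (i) $\int_0^1q_\ell=0$ for every loop and (ii) that the sum of the roots of the remaining factor, which by Vieta and Kirchhoff's spanning-tree count equals $(1-r)\sum_{\text{non-loop}}\int_0^1q_j$ times the number of spanning trees, is zero; with $r<1$ this gives the contradiction. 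Your version reformulates $g(d)/N$ as $\det(dI-P_ZQP_Z|_Z)$ on the cycle space and replaces ``sum of roots'' by ``trace'', using $(P_Z)_{jj}=1-r_j$; this identification is true (it is a Cauchy--Binet / all-minors Matrix Tree computation, not really first-order perturbation theory plus Riemann--Lebesgue as you suggest), and your loop argument ($e_\ell\in Z$, so $\langle e_\ell,P_ZQP_Z e_\ell\rangle=\int_0^1q_\ell$) and case split on whether $\bar q$ is among the vanishing roots are both sound. But the identification is left unproven in your write-up, and it buys you nothing over Vieta's formulas applied to (\ref{eigpol}), which you already have from the Corollary: the trace identity you want is literally the coefficient of $d^{\deg g-1}$ in $g$, computed by counting, for each non-loop edge, the spanning trees omitting it --- which is where the effective resistance enters via Kirchhoff's formula. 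So: same skeleton, correct conclusion, but the operator-theoretic detour introduces an acknowledged gap that the elementary polynomial argument sidesteps entirely.
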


Interesting examples are the complete graph (if $|V|>2$) or a graph with one point and a loop, which corresponds to the case of periodic boundary conditions treated in \cite{ChengWangWu2010}.

\end{section}

\begin{section}{The proof}

Denote by $c_j(x,\lambda)$ the solution of (\ref{sch}) which satisfies the conditions
$c_j(0,\lambda)-1 =c_j'(0,\lambda) = 0$ and by $s_j(x,\lambda)$ the solution of (\ref{sch}) which satisfies the conditions $s_j(0,\lambda) =s_j'(0,\lambda)-1 = 0$. %
Each $y_j(x,\lambda)$ may be written as a linear combination
\begin{align}
y_j(x,\lambda)=A_j(\lambda)c_j(x,\lambda)+\tilde B_j(\lambda)s_j(x,\lambda).
\end{align}
Then $y_j(0,\lambda)=A_j(\lambda)$ is the same on each outgoing edge; hence we choose to index the functions $A(\lambda)$ by vertices, and then
\begin{align}\label{eigfun}
y_j(x,\lambda)=A_v(\lambda)c_j(x,\lambda)+\tilde B_j(\lambda)s_j(x,\lambda),
\end{align}
if $e_j$ starts from $v$. If the eigefunctions are normalized, i.e., $\sum_j\|y_j(x,\lambda)\|_2^2=1$, then $A_v(\lambda)=O(1)$, $\tilde B_j(\lambda)=O(\sqrt{\lambda})$ (\cite{CarlsonPivovarchik2007ambarzumian}). For later calculations it is more convenient to work with the $O(1)$-variables $A_v(\lambda)$ and $B_j(\lambda)=\frac{\tilde B_j(\lambda)}{\sqrt{\lambda}}$.

The coefficients $A_v$ and $B_j$ form a $(|V|+|E|)$-vector, which satisfies $|V|$ Kirchhoff conditions at the vertices and $|E|$ continuity conditions at the incoming ends of edges, namely, for all $v\in V(G)$,
\begin{align}%
\sum_{e_j:\ldots\to v} \frac{1}{\sqrt{\lambda}}A_{v_j}(\lambda)c'_j(1,\lambda)+B_j(\lambda)s_j'(1,\lambda)-\sum_{e_j:v\to\ldots}B_j(\lambda)=0,
\end{align}
where in the first sum $v_j$ denotes the starting point of $e_j$; and for all $e_j\in E(G)$,
\begin{align}%
A_u(\lambda)c_j(1,\lambda)+\sqrt{\lambda}B_j(\lambda)s_j(1,\lambda)-A_v(\lambda)&=0,
\end{align}
if $e_j$ points from $u$ to $v$.

The matrix of this homogeneous linear system of equations has the form $M=\left[\begin{array}{cc}A&B\\C&D\end{array}\right]$, where
\begin{itemize}
\item $A$ is like an adjacency matrix; $a_{vu}=\frac{1}{\sqrt{\lambda}}\sum c_j'(1,\lambda)$, the sum is taken on edges pointing from $u$ to $v$;
\item $B$ and $C$ are like incidence matrices;\\
$b_{vj}=\left\{\begin{array}{ccc}s_j'(1,\lambda)&\textrm{ if $e_j$ ends in $v$}\\-1&\textrm{ if $e_j$ starts from $v$}\\s_j'(1,\lambda)-1&\textrm{ if $e_j$ is a loop in $v$}\\0&\textrm{otherwise}\end{array}\right.$\\
$c_{jv}=\left\{\begin{array}{ccc}-1&\textrm{ if $e_j$ ends in $v$}\\c_j(1,\lambda)&\textrm{ if $e_j$ starts from $v$}\\-1+c_j(1,\lambda)&\textrm{ if $e_j$ is a loop in $v$}\\0&\textrm{otherwise}\end{array}\right.$
\item $D$ is a diagonal matrix, $d_{jj}=\sqrt{\lambda}s_j(1,\lambda)$.
\end{itemize}

The determinant of the matrix $M$ is the so-called spectral determinant of the problem (\ref{sch})-(\ref{Kirchhoff}).

\noindent{\bf Example 1. }
Consider a single vertex with a loop. Then
\begin{equation*}
M=M_1=\left[\begin{array}{c|c}
  \frac{1}{\sqrt{\lambda}}c'(1,\lambda)&s'(1,\lambda)-1\\\hline
  -1+c(1,\lambda)&\sqrt{\lambda}s(1,\lambda)
\end{array}\right].
\end{equation*}

\noindent{\bf Example 2. }
Consider a star graph with root $r$ and vertices $u$, $v$ and $w$. Let $e_1$, $e_2$ and $e_3$ point from $u$, $v$ and $w$ to $r$, respectively. We choose to index rows and columns by $r$, $u$, $v$, $w$, $e_1$, $e_2$, $e_3$, in that order. Then the matrix $M=M_2$ is:
\begin{equation*}
\left[\begin{array}{cccc|ccc}
  0&\mkern-12mu\frac{1}{\sqrt{\lambda}}c_1'(1,\lambda)&\mkern-12mu\frac{1}{\sqrt{\lambda}}c_2'(1,\lambda)&\mkern-12mu\frac{1}{\sqrt{\lambda}}c_3'(1,\lambda)&s_1'(1,\lambda)&s_2'(1,\lambda)&s_3'(1,\lambda)\\
  0&0&0&0&-1&0&0\\
  0&0&0&0&0&-1&0\\
  0&0&0&0&0&0&-1\\\hline
  \mkern-6mu-1&c_1(1,\lambda)&0&0&\mkern-6mu\sqrt{\lambda}s_1(1,\lambda)&0&0\\
  \mkern-6mu-1&0&c_2(1,\lambda)&0&0&\mkern-18mu\sqrt{\lambda}s_2(1,\lambda)&0\\
  \mkern-6mu-1&0&0&c_3(1,\lambda)&0&0&\mkern-18mu\sqrt{\lambda}s_3(1,\lambda)\mkern-6mu
\end{array}\right]\mkern-6mu,
\end{equation*}
with determinant $\frac{1}{\sqrt{\lambda}}\sum_{j=1}^{3}c_j'(1,\lambda)\prod_{p\ne j}c_p(1,\lambda)$ (corresponding to formula (5) of \cite{Pivovarchik2005ambarzumian}).

The elements of $M$ have the following asymptotics for $\lambda=k^2\pi^2+d+o(1)$ (see \cite{CarlsonPivovarchik2007ambarzumian} eq. (2.3) or \cite{LawYanagida2012} Lemma 3.1):
\begin{align}
\frac{1}{\sqrt{\lambda}}c_j'(1,\lambda)&=(-1)^k\frac{1}{2\sqrt{\lambda}}(\int_0^{1}q_j-d)+o(\frac{1}{\sqrt{\lambda}}),\label{asym1}\\
s_j'(1,\lambda)&=(-1)^k+o(\frac{1}{\sqrt{\lambda}}),\label{asym2}\\
c_j(1,\lambda)&=(-1)^k+o(\frac{1}{\sqrt{\lambda}}),\label{asym3}\\
\sqrt{\lambda}s_j(1,\lambda)&=(-1)^k\frac{1}{2\sqrt{\lambda}}(d-\int_0^{1}q_j)+o(\frac{1}{\sqrt{\lambda}}).\label{asym4}
\end{align}

\rem
Using these asymptotics, we get $\det M_1=c'(1,\lambda)s(1,\lambda)+o(\frac{1}{\lambda})$ for $k$ even (using the Wronskyan would yield only $o(\frac{1}{\sqrt{\lambda}})$), and 
$\det M_2=\frac{1}{\sqrt{\lambda}}\sum_{j=1}^{3}c_j'(1,\lambda)+o(\frac{1}{\sqrt{\lambda}})$, for all $k$. These are special cases of (\ref{detM}) below.

\begin{lm}\label{incidence}
If $\lambda=(2k)^2\pi^2+O(1)$, or $\lambda=k^2\pi^2+O(1)$ and $G$ is bipartite, then every $|V|\times |V|$ submatrix of $C$ (and of $B$) has determinant at most $o(\frac{1}{\sqrt{\lambda}})$.
\end{lm}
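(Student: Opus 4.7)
The strategy is to approximate $C$ and $B$ by matrices $C_0$, $B_0$ of rank at most $|V|-1$ and to use multilinearity of the determinant to control the perturbation. Define $C_0$ to have entry $-1$ in row $e_j$ and column $v$ if $e_j$ ends in $v$, $+1$ if $e_j$ starts from $v$, and $0$ if $e_j$ is a loop in $v$ or $e_j$ is not incident to $v$. Each non-loop row of $C_0$ then has exactly one $+1$ and one $-1$, so $C_0\mathbf{1}=0$ and $\mathrm{rank}(C_0)\le|V|-1$; in particular, every $|V|\times|V|$ submatrix $S_0$ of $C_0$ has $\det S_0=0$.

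In the case $\lambda=(2k\pi)^2+O(1)$, the asymptotic (\ref{asym3}) gives $c_j(1,\lambda)=1+o(1/\sqrt{\lambda})$, so $E:=C-C_0$ has every entry of size $o(1/\sqrt{\lambda})$. For the bipartite case $\lambda=(k\pi)^2+O(1)$ with $k$ odd, (\ref{asym3}) yields $c_j(1,\lambda)=-1+o(1/\sqrt{\lambda})$; I would first negate the columns of $C$ indexed by $V_1$, which flips the sign of each $|V|\times|V|$ minor. By the bipartite assumption every edge starts in $V_1$ and ends in $V_2$, so this flip turns the ``start'' entries into $1+o(1/\sqrt{\lambda})$ while leaving ``end'' entries at $-1$, reducing to the previous situation. (For bipartite $k$ even, no modification is needed.)

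For any $|V|\times|V|$ submatrix $S=S_0+\tilde E$ of the (possibly modified) $C$, I expand by multilinearity in the rows: $\det S=\sum_{T\subseteq\{1,\ldots,|V|\}}\det M_T$, where $M_T$ has row $i$ taken from $\tilde E$ for $i\in T$ and from $S_0$ otherwise. The term $T=\emptyset$ gives $\det S_0=0$; for $|T|\ge 1$, Laplace expansion along the $\tilde E$-rows bounds $|\det M_T|$ by a constant (depending only on $|V|$) times $(\max_{i,j}|\tilde E_{ij}|)^{|T|}=o(1/\sqrt{\lambda})$. Summing over the $2^{|V|}$ subsets yields $\det S=o(1/\sqrt{\lambda})$. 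The argument for $B$ is parallel, using (\ref{asym2}) in place of (\ref{asym3}) and interchanging the roles of start and end in the definition of $B_0$. The main subtlety lies in the bipartite-$k$-odd case, where the bipartite hypothesis is essential to recover the rank-$|V|-1$ leading matrix via column negations; without bipartiteness the naive leading matrix would be $-1$ times the unsigned incidence matrix and need not be rank-deficient.
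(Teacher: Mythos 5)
Your proof is correct, and its overall shape matches the paper's: replace $C$ by a leading incidence-type matrix, show that matrix is singular, and absorb the $o(1/\sqrt{\lambda})$ perturbation by multilinearity of the determinant. The mechanism for the singularity step is genuinely different, though. The paper looks only at the chosen $|V|$ rows: their leading part is the incidence matrix of a subgraph with $|V|$ vertices and $|V|$ edges, which must contain a cycle, and the rows of that cycle are linearly dependent (in the bipartite, odd-$k$ case this tacitly uses that every cycle of a bipartite graph is even, so that alternating $\pm1$ coefficients annihilate the rows of the unsigned incidence matrix). You instead bound the rank of the entire leading matrix by $|V|-1$ via $C_0\mathbf{1}=0$, which kills every $|V|\times|V|$ minor at once with no cycle argument, and your column-negation trick reduces the bipartite odd-$k$ (unsigned) case to the signed one explicitly; this is a slightly more elementary and more uniform route. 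One small imprecision: bipartiteness alone does not force every edge to point from $V_1$ to $V_2$ --- that normalization comes from the paper's remark that reversing an edge (and reflecting its potential) leaves the problem unchanged --- but your argument survives without it, since after negating the $V_1$-columns every non-loop row still has exactly one $+1$ and one $-1$, so the row sums still vanish and the rank bound goes through.
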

\begin{proof}
Leaving out the $o(\frac{1}{\sqrt{\lambda}})$ terms from the submatrix we make only $o(\frac{1}{\sqrt{\lambda}})$ error in its determinant. What we get is an incidence matrix of a graph with $|V|$ vertices and $|V|$ edges. This must contain a circle, hence the corresponding rows are dependent. The proof for $B$ is similar.
\end{proof}

\begin{lm}
The determinant of $M$ is $O(\lambda^{-\frac{1}{2}(|E|-|V|+2)})$.
\end{lm}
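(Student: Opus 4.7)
The plan is to Laplace-expand $\det M$ along the top $|V|$ rows. For each subset $T \subset \{1,\ldots,|V|+|E|\}$ with $|T|=|V|$, let $k := |T \cap \{1,\ldots,|V|\}|$ be the number of columns drawn from the $A$-block. The associated Laplace term is, up to sign, the product of a $|V|\times|V|$ minor of $[A \mid B]$ (using $k$ columns of $A$-type and $|V|-k$ of $B$-type) and an $|E|\times|E|$ minor of $[C \mid D]$ (using $|V|-k$ columns of $C$-type and $|E|-|V|+k$ of $D$-type).

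Because $D$ is diagonal, the bottom minor factors as the product of the $|E|-|V|+k$ chosen diagonal entries of $D$ with the determinant of a $(|V|-k)\times(|V|-k)$ submatrix of $C$. The asymptotics (\ref{asym1})--(\ref{asym4}) give $A_{vu}=O(\lambda^{-1/2})$ and $D_{jj}=O(\lambda^{-1/2})$, while the entries of $B$ and $C$ are $O(1)$. A direct count of the powers of $\lambda^{-1/2}$ appearing in a single Laplace term then yields the bound $O(\lambda^{-(|E|-|V|+2k)/2})$. For every $k\ge 1$, this already lies within the claimed $O(\lambda^{-(|E|-|V|+2)/2})$, so these terms are harmless.

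The delicate case is $k=0$, for which the raw count yields only $O(\lambda^{-(|E|-|V|)/2})$ — two powers of $\lambda^{-1/2}$ short. Here, however, the top factor is a full $|V|\times|V|$ submatrix of $B$, and the nontrivial ingredient of the bottom factor is a full $|V|\times|V|$ submatrix of $C$. Lemma~\ref{incidence} tells us that both such determinants are $o(\lambda^{-1/2})$, contributing exactly the two missing factors and pushing the $k=0$ contribution to $o(\lambda^{-(|E|-|V|+2)/2})$. Summing over the finitely many subsets $T$ gives $\det M = O(\lambda^{-(|E|-|V|+2)/2})$.

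I do not expect any serious obstacle beyond careful bookkeeping; the decisive observation is to Laplace-expand along the top $|V|$ rows and to exploit the diagonal structure of $D$ so that Lemma~\ref{incidence} applies directly to the otherwise-dominant $k=0$ terms, whose naive order exceeds the target by precisely the two powers of $\sqrt{\lambda}$ supplied by the lemma.
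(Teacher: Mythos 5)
Your proof is correct and follows essentially the same route as the paper: a Laplace expansion of $\det M$ with power counting in $\lambda^{-1/2}$, where the only case exceeding the naive bound (all top-row entries drawn from $B$, your $k=0$) is rescued by applying Lemma~\ref{incidence} to the resulting $|V|\times|V|$ minors of $B$ and $C$. Your write-up just makes explicit the bookkeeping that the paper compresses into one sentence.
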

\begin{proof}
Look at the terms in the Laplace expansion.
Taking $(|V|-1)$ factors from $B$ (and consequently from $C$) we have to take $(|E|-|V|+2)$ factors of magnitude $O(\frac{1}{\sqrt{\lambda}})$ from $A$ and $D$; otherwise, we get smaller terms, using the previous lemma.
\end{proof}
The next statement is a variant of the Matrix Tree Theorem (\cite{Kirchhoff1847}; see also \cite{Lovasz1993Combinatorial}, p. 252, \cite{Bollobas1998Modern}, Theorem II.12, \cite{Tutte2001book}, Theorem VI.29).

\begin{thrm}\label{direct}
If $\lambda=k^2\pi^2+O(1)$ and $k$ is even or $G$ is bipartite, then
\begin{align}\label{detM}
\det M=(-1)^{k|V|}\sum_{\tau}\!\left(\!\frac{1}{\sqrt{\lambda}}\sum_{e_j\in G} c_j'(1,\lambda)\!\right)\!\!\prod_{e_j\notin\tau}\sqrt{\lambda}s_j(1,\lambda)+o(\lambda^{-\frac{|E|-|V|+2}{2}}).
\end{align}
where the sum is taken for all spanning trees $\tau$ of $G$.
\end{thrm}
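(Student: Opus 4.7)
The plan is to compute $\det M$ by the generalized Laplace expansion along the block partition of rows into the $|V|$ vertex-rows and the $|E|$ edge-rows, and to isolate the term of the claimed order $\lambda^{-(|E|-|V|+2)/2}$. Each term of the expansion corresponds to a subset $S$ of $|V|$ columns, split into $j$ columns from the vertex-block (contributing to $A$ and $C$) and $|V|-j$ from the edge-block (contributing to $B$ and $D$). Using the asymptotics (\ref{asym1})--(\ref{asym4}), the top minor $M_S$ has magnitude $O(\lambda^{-j/2})$ (from the $j$ columns of $A$) while the bottom minor $M_{S^c}$, after expanding along the $|E|-|V|+j$ diagonal $D$-columns, has magnitude $O(\lambda^{-(|E|-|V|+j)/2})$. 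The product is therefore $O(\lambda^{-(|E|-|V|+2j)/2})$, so only $j=1$ meets the target order; the case $j=0$, which would force a $|V|\times|V|$ submatrix of $B$ (and, after expanding $M_{S^c}$ along $D$, one of $C$) into the product, is killed by Lemma~\ref{incidence}.

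For $j=1$, write $S=\{v\}\cup\tau$ with $v\in V$ and $|\tau|=|V|-1$. Expanding $\det M_{S^c}$ along the diagonal $D$-columns isolates the factor $\prod_{j\notin\tau}\sqrt\lambda\,s_j(1,\lambda)$ times the $(|V|-1)\times(|V|-1)$ minor of $C$ on rows $\tau$ and columns $V\setminus\{v\}$, while expanding $\det M_S$ along its $A$-column yields $\sum_u (\pm)\,A_{uv}\,\det B|_{V\setminus\{u\},\tau}$. The asymptotics (\ref{asym2})--(\ref{asym3}) identify the leading parts of $B$ and $C$ (up to a global sign $(-1)^k$ and, in the bipartite $k$-odd case, a row-sign flip along one class of the bipartition) with the signed vertex-edge incidence matrix of $G$. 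A standard component of the Matrix Tree Theorem then implies that a $(|V|-1)\times(|V|-1)$ minor of the signed incidence matrix equals $\pm1$ on spanning trees and $0$ otherwise, forcing $\tau$ to be a spanning tree in order to contribute at the leading order.

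Writing $A_{uv}=\frac{1}{\sqrt\lambda}\sum_{e_j:v\to u}c_j'(1,\lambda)$, the surviving $j=1$ contribution becomes a sum indexed by spanning trees $\tau$ and by edges $e_j\in G$, each contributing $\frac{c_j'(1,\lambda)}{\sqrt\lambda}\prod_{j'\notin\tau}\sqrt\lambda\,s_{j'}(1,\lambda)$ with a composite sign obtained from (i) the outer Laplace expansion, (ii) the two inner cofactor expansions, (iii) the spanning-tree minor signs of the signed incidence matrix, and (iv) the $(-1)^k$ prefactors in (\ref{asym2})--(\ref{asym3}). \textbf{The main obstacle is to verify} that this composite sign collapses to the constant $(-1)^{k|V|}$ independently of $\tau$ and $e_j$; this is the combinatorial heart of the argument and parallels the fact that all cofactors of a weighted graph Laplacian are equal, so that for each fixed $\tau$ the per-edge contributions add (rather than cancel) to reconstitute the full edge sum $\sum_{e_j\in G}c_j'(1,\lambda)$. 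In the bipartite case with $k$ odd the verification additionally uses the preliminary row-sign flip along one class of the bipartition, whose $(-1)^{|V_i|}$ cost is absorbed into the $(-1)^{k|V|}$ prefactor; everything else is routine determinant expansion together with the classical Matrix Tree Theorem.
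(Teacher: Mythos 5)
Your overall strategy is the paper's: a Laplace expansion of $\det M$ in which the leading-order terms are exactly those taking $|E|-|V|+1$ diagonal factors from $D$, with Lemma \ref{incidence} disposing of the would-be larger $j=0$ terms and the order count eliminating $j\ge 2$; the surviving $(|V|-1)\times(|V|-1)$ minors of $B$ and $C$ are then recognized as incidence-matrix minors of the edge set $\tau$, which vanish to leading order unless $\tau$ is a spanning tree. Up to this point your argument is correct and parallels the paper's proof of Theorem \ref{direct} together with the lemmas surrounding it.

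However, the step you yourself flag as ``the main obstacle'' --- that the composite sign coming from the outer block-Laplace expansion, the cofactor expansions of $M_S$ and $M_{S^c}$, and the $\pm1$ values of the tree-incidence minors collapses to $(-1)^{k|V|}$ independently of $\tau$, $u$ and $v$ --- is asserted by analogy but not proved, and it is precisely the content the paper does not leave to analogy: the two lemmas following Theorem \ref{direct} compute $\det\left[\begin{array}{cc}X&R\\-R^T&0\end{array}\right]=s$ for a tree incidence matrix $R$ and a matrix $X$ with a single nonzero entry $s$, \emph{independently of the position of that entry}, by adding rows and columns along tree paths (plus a row-sign flip on one class of the bipartition in the $k$-odd case); multilinearity in $X$ then converts this into the full sum $\sum_{u,v}a_{uv}=\frac{1}{\sqrt\lambda}\sum_{e_j}c_j'(1,\lambda)$ with a single global sign. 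Your appeal to ``all cofactors of a weighted graph Laplacian are equal'' points at the right circle of ideas (the all-minors Matrix Tree Theorem), but it does not by itself track the permutation signs of the generalized Laplace expansion, which depend on where the column $v$ and the edge columns of $\tau$ sit in the ordering; showing that these cancel against the minor signs is exactly the bookkeeping you have omitted. To complete the proof you should either carry out this sign computation explicitly or prove the block-determinant identity above directly, as the paper does.
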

\begin{proof}
The main terms in the Laplace expansion are those which contain exactly $(|E|-|V|+1)$ elements from $D$. The product of a fixed set of $(|E|-|V|+1)$ elements in $D$ is weighted by the determinant of the respective minor, with all other elements of $D$ substituted by zero. The remaining rows in $C$ and columns in $B$ look like an ordered (or unordered) incidence matrix of the graph $\tau$ spanned by the remaining $(|V|-1)$ edges for $k$ even (or odd, respectively). If $\tau$ contains a circle, then the determinant of the minor is $o(\frac{1}{\sqrt{\lambda}})$. Otherwise $\tau$ is a spanning tree of $G$; then the determinant is the sum of the elements in $A$ (plus $o(\frac{1}{\sqrt{\lambda}})$), as it follows from the next two lemmas.
\end{proof}
\begin{lm}
Let $\tau$ be a spanning tree of $G$ and $R$ the ordered vertex-edge incidence matrix for $\tau$. Consider the matrix $M_1=\left[\begin{array}{cc}X&R\\-R^T&Y\end{array}\right]$, where $Y$ is a $(|V|-1)\times(|V|-1)$ zero matrix and $X$ has only one nonzero element $s$. Then the determinant of $M_1$ is $s$, independently of the position of the nonzero element in $X$.
\end{lm}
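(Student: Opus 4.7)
The plan is to exploit that $\det M_1$ is affine in $s$, since $s$ occurs in only one entry of $M_1$. First I would observe that when $s=0$ the matrix becomes
\[
\begin{pmatrix} 0 & R \\ -R^T & 0 \end{pmatrix},
\]
which is skew-symmetric of odd size $2|V|-1$, so its determinant vanishes. Consequently $\det M_1 = c\,s$, where $c=(-1)^{i_0+j_0}\det M_1^{(i_0,j_0)}$ is the $(i_0,j_0)$-cofactor of $M_1$, writing $(i_0,j_0)$ for the position of $s$ inside $X$. The task therefore reduces to proving $c=1$ for every position $(i_0,j_0)$.

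Next I would compute the cofactor directly. Deleting row $i_0$ and column $j_0$ of $M_1$ eliminates the sole nonzero entry of $X$, leaving a matrix of the block anti-diagonal form
\[
M_1^{(i_0,j_0)} = \begin{pmatrix} 0 & R_{-i_0} \\ -(R_{-j_0})^T & 0 \end{pmatrix},
\]
where $R_{-p}$ denotes $R$ with its $p$-th row removed. The standard block anti-diagonal determinant formula, applied to the two $(|V|-1)\times(|V|-1)$ blocks, together with the fact that the two sign factors $(-1)^{|V|-1}$ (from the column swap and from the $-$ sign on $(R_{-j_0})^T$) cancel, gives $\det M_1^{(i_0,j_0)} = \det(R_{-i_0})\det(R_{-j_0})$, so the cofactor is $(-1)^{i_0+j_0}\det(R_{-i_0})\det(R_{-j_0})$.

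The only delicate step left is to pin down the signs $\det(R_{-p})$. Since $\tau$ is a tree, applying the Matrix Tree Theorem to $\tau$ itself gives $\det(R_{-p})^2 = \det(R_{-p}R_{-p}^T) = 1$, so each minor is $\pm 1$. To compare the signs I would consider the vector $v\in\mathbb{R}^{|V|}$ defined by $v_p=(-1)^{p+1}\det(R_{-p})$. For every column $R_{\cdot,k}$ of $R$, Laplace expansion down the first column of the square matrix $[R_{\cdot,k}\mid R]$ identifies $\langle v, R_{\cdot,k}\rangle$ with $\det[R_{\cdot,k}\mid R]$, which vanishes because of the repeated column. Hence $v\in\ker R^T$; since $\tau$ is connected, $\ker R^T$ is spanned by $\mathbf{1}$, so $v=c\mathbf{1}$ with $c\in\{\pm 1\}$. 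Substituting $\det(R_{-p})=(-1)^{p+1}c$ into the cofactor gives $(-1)^{i_0+j_0}\cdot(-1)^{i_0+1}c\cdot(-1)^{j_0+1}c=c^2=1$, and hence $\det M_1=s$. The main obstacle is this final sign bookkeeping; everything else is a clean block-matrix computation.
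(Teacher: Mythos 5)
Your proof is correct, but it takes a genuinely different route from the paper's. The paper also starts from the observation that $\det M_1$ equals $s$ times a constant (there, because every nonzero permutation term must pass through the single entry of $X$), but it then determines that constant combinatorially: adding to row $u$ the rows indexed by the vertices of a tree path shows the determinant is independent of $v$ (and symmetrically of $u$), after which edge reversals and further row/column operations reduce to the case where $\tau$ is a path and the entry sits at position $(|V|,1)$, where a left-to-right expansion gives $\left((-1)^{|V|}\right)^{|V|-1}=1$. You instead compute the constant in closed form: skew-symmetry of the $s=0$ matrix (odd size $2|V|-1$) kills the constant term, the block anti-diagonal structure of the $(i_0,j_0)$ minor factors the cofactor as $(-1)^{i_0+j_0}\det(R_{-i_0})\det(R_{-j_0})$, and the signs are pinned down by observing that $p\mapsto(-1)^{p+1}\det(R_{-p})$ lies in $\ker R^T=\mathrm{span}(\mathbf{1})$ for the connected tree $\tau$, so all these minors are $\pm1$ with alternating-compatible signs and the product collapses to $c^2=1$. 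Your version makes the position-independence transparent (it is visible in the factored cofactor) and isolates the graph-theoretic input as standard facts about reduced incidence matrices of trees, at the cost of invoking the Matrix Tree Theorem and a kernel argument; the paper's version is more elementary and self-contained but buries the sign bookkeeping in the reduction to a path. Both arguments are complete and yield $\det M_1=s$.
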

\begin{proof}
In the Laplace expansion every nonzero term (in fact there is only one) contains an element from $X$, hence it is enough to prove this for $s=1$. Let the indices of the nonzero element in $X$ be $uv$. First we prove that the determinant of $M_1$ is independent of $v$. Indeed, there is a path in the tree between two arbitrary vertices, hence we can add to (or subtract from) row $u$ the rows corresponding to the vertices of that path. Similarly, the determinant does not depend on $u$. Reversing an edge in $G$ does not change the determinant; using this and by adding rows (and corresponding columns) we can assume that $\tau$ is a path. Then taking $u=|V|$, $v=1$ and expanding the determinant from left to the right we get $((-1)^{|V|)})^{|V|-1}=1$.
\end{proof}
\begin{lm}
Let $G$ be a bipartite graph, $\tau$ a spanning tree of $G$ and $R$ the unordered incidence matrix for $\tau$. Consider the matrix $M_1=\left[\begin{array}{cc}X&-R\\-R^T&Y\end{array}\right]$, where $Y$ is a $(|V|-1)\times(|V|-1)$ zero matrix and $X$ has only one nonzero element, $x_{uv}=s$, such that $(uv)\in E(G)$. Then the determinant of $M_1$ is $(-1)^{|V|}s$, independently of $u$ and $v$.
\end{lm}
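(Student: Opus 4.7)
The plan is to reduce the bipartite unordered case to the previous lemma by using the bipartition of $G$ to turn $R$ into an ordered incidence matrix. Write $V=V_1\cup V_2$ for the bipartition, set $\epsilon_v=+1$ on $V_1$ and $\epsilon_v=-1$ on $V_2$, and let $D$ denote the $|V|\times|V|$ diagonal matrix with entries $\epsilon_v$. The first step is to conjugate $M_1$ by the block-diagonal matrix $\tilde D=\left[\begin{array}{cc}D&0\\0&I\end{array}\right]$: since $\det(\tilde D)^2=1$, this preserves the determinant, while each entry of the upper-left $X$-block is multiplied by the product of the signs at its row and column indices, and $-R$ (respectively $-R^T$) is replaced by $-DR$ (respectively $-R^TD=-(DR)^T$).

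The key observation is that $\hat R:=DR$ is an \emph{ordered} vertex-edge incidence matrix of $\tau$: every edge of $\tau\subseteq G$ joins $V_1$ to $V_2$, so the two nonzero entries in each column of $\hat R$ are $+1$ and $-1$ rather than both $+1$. Moreover, the hypothesis $(uv)\in E(G)$ forces $u$ and $v$ to lie in different parts, so $\epsilon_u\epsilon_v=-1$ and the conjugated $X$-block $DXD$ still has a single nonzero entry, now equal to $-s$ at position $(u,v)$.

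Next I would multiply the $|V|-1$ columns indexed by the edges of $\tau$ by $-1$, contributing a factor $(-1)^{|V|-1}$ to the determinant and replacing $-\hat R$ in the upper right by $\hat R$; the lower-left $-\hat R^T$ and the zero block $Y$ are unaffected. The resulting matrix has exactly the form handled by the previous lemma (with $\hat R$ playing the role of the ordered incidence matrix of $\tau$ and $DXD$ the role of the single-entry upper-left block), so its determinant equals the unique nonzero entry $-s$. Unwinding the sign factor yields $\det M_1=(-1)^{|V|-1}\cdot(-s)=(-1)^{|V|}s$, as claimed.

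The plan is essentially bookkeeping once the bipartite relabelling is in place; the only point requiring care is that the hypothesis $(uv)\in E(G)$ is used precisely to guarantee $\epsilon_u\epsilon_v=-1$. Had the nonzero entry sat at a position $(u,v)$ not corresponding to an edge of $G$, bipartiteness would not constrain the signs and the final answer would pick up an unwanted extra $-1$. In the intended application to (\ref{detM}) this causes no loss, because the nonzero entries of $X$ come from the $A$-block of $M$, whose support is exactly $E(G)$.
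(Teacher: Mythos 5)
Your proof is correct and takes essentially the same approach as the paper: both use the bipartition to change signs of vertex rows/columns so that the unordered incidence matrix of $\tau$ becomes an ordered one, track the resulting factor in the determinant, and invoke the previous lemma. The paper negates the $V_1$-rows and $V_2$-columns directly (picking up $(-1)^{|V|}$ while leaving $x_{uv}$ unchanged), whereas you conjugate by the sign matrix and then negate the edge columns; the bookkeeping differs slightly but the idea is identical.
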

\begin{proof}
There is only one nonzero term in the Laplace expansion of the determinant, which contains only $\pm1$'s besides $s$, hence $\det M_1=\pm s$. If $V$ is a disjoint union of $V_1$ and $V_2$ such that all edges connect $V_1$ to $V_2$, then let us multiply by $(-1)$ the rows in $R$ corresponding to $V_1$ and the columns in $R^T$ corresponding to $V_2$, respectively. This multiplies the determinant by $(-1)^{|V|}$ leaving the nonzero element of $X$ unchanged for $(uv)\in E(G)$. Hence the statement follows from the previous lemma.
\end{proof}
Substituting the asymptotics (\ref{asym1})-(\ref{asym4}) we get
\begin{cor}
If $\lambda=k^2\pi^2+d+o(1)$ and $k$ is even or $G$ is bipartite, then
\begin{align}
\det M=(-1)^{k|E|}\left(\frac{1}{2\sqrt{\lambda}}\right)^{|E|-|V|+2}p(d)+o(\lambda^{-\frac{1}{2}(|E|-|V|+2)}),
\end{align}
where
\begin{align}\label{eigpol}
p(d)=\sum_{\tau}\left(\sum_{e_j\in G}\int_0^{1}q_j-|E|d\right)\prod_{e_j\notin\tau}(d-\int_0^{1}q_j),
\end{align}
the outer sum is taken for all spanning trees $\tau$ of $G$.
\end{cor}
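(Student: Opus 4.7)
The plan is to substitute the asymptotics (\ref{asym1})--(\ref{asym4}) directly into formula (\ref{detM}) from Theorem \ref{direct}, retain only the leading terms, and reconcile the signs. First I would simplify the bracketed factor $\frac{1}{\sqrt\lambda}\sum_{e_j\in G} c_j'(1,\lambda)$ using (\ref{asym1}); since each of the $|E|$ summands contributes $(-1)^k\frac{1}{2\sqrt\lambda}(\int_0^{1} q_j - d) + o(\lambda^{-1/2})$, the bracket reduces to
$$
(-1)^k \frac{1}{2\sqrt\lambda}\Bigl(\sum_j\int_0^{1} q_j - |E|d\Bigr) + o(\lambda^{-1/2}),
$$
which is exactly the linear-in-$d$ factor appearing in each summand of $p(d)$.

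Next, for each spanning tree $\tau$, the product $\prod_{e_j\notin\tau}\sqrt\lambda\, s_j(1,\lambda)$ has exactly $|E|-|V|+1$ factors, each $O(\lambda^{-1/2})$ by (\ref{asym4}). Expanding this product multiplicatively and keeping the leading term in every factor yields
$$
(-1)^{k(|E|-|V|+1)} \Bigl(\tfrac{1}{2\sqrt\lambda}\Bigr)^{|E|-|V|+1} \prod_{e_j\notin\tau}\bigl(d-\int_0^{1} q_j\bigr),
$$
while any cross term in which one or more factors are replaced by their $o(\lambda^{-1/2})$ remainders is of order $o(\lambda^{-(|E|-|V|+1)/2})$. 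Multiplying this by the bracket above and summing over spanning trees (a finite sum) gives the claimed asymptotic, with the single remainder $o(\lambda^{-(|E|-|V|+2)/2})$ absorbing both the error already present in (\ref{detM}) and all cross terms.

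It then remains to check the sign. The prefactor $(-1)^{k|V|}$ in (\ref{detM}) combines with $(-1)^k$ from the simplified bracket and $(-1)^{k(|E|-|V|+1)}$ from the product over $e_j\notin\tau$, producing total parity $k\bigl(|V|+1+|E|-|V|+1\bigr)=k(|E|+2)$, congruent to $k|E|\pmod{2}$, which matches the $(-1)^{k|E|}$ in the statement. The computation is essentially mechanical, so I do not anticipate a real obstacle; the only point requiring care is verifying that the number of cross terms produced by expanding the product is bounded independently of $\lambda$, so that they all fit inside the declared $o(\lambda^{-(|E|-|V|+2)/2})$ remainder. This is immediate because both the product over $e_j\notin\tau$ and the sum over spanning trees are finite combinatorial objects depending only on $G$.
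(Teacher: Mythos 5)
Your proposal is correct and follows exactly the paper's route: the paper derives this corollary simply by substituting the asymptotics (\ref{asym1})--(\ref{asym4}) into (\ref{detM}), and your sign bookkeeping $(-1)^{k(|V|+1+|E|-|V|+1)}=(-1)^{k|E|}$ and power count $1+(|E|-|V|+1)=|E|-|V|+2$ check out. You have in fact supplied more detail (the treatment of cross terms) than the paper does.
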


\noindent{\bf Proof of Theorem \ref{equallengthdirect}.}

$\lambda$ is an eigenvalue of the eigenvalue problem (\ref{sch})-(\ref{Kirchhoff}) if and only if $\det M(\lambda)=0$. Let the distinct roots of $p(d)$ be $d_1,\ldots,d_l$. By the previous corollary for $\lambda=k^2\pi^2+O(1)$ (if $k$ is even or $G$ is bipartite) the distinct roots of $\det M(\lambda)$ are exactly of the form $\lambda=k^2\pi^2+d_j+o(1)$ $(1\le j\le l)$. As it is seen from (\ref{eigpol}), one of them is $\lambda=k^2\pi^2+\frac{1}{|E|}\sum_{e_j\in G}\int_0^{1}q_j+o(1)$.

It remains to prove that the total multiplicities of the eigenvalues $\lambda=k^2\pi^2+O(1)$ are exactly $|E|-|V|+2$. A direct calculation shows that this is true for $q=0$. Indeed, then $\det M$ is a polynomial of $\cos{\sqrt{\lambda}}$ and $\sin{\sqrt{\lambda}}$, hence its zeros are $2\pi$-periodic (in the bipartite case $\pi$-periodic) in $\sqrt{\lambda}$.
Hence $\lambda=k^2\pi^2+O(1)\implies\sqrt{\lambda}=k\pi+o(1)$ implies $\sqrt{\lambda}=k\pi$ with finitely many exceptions. For $\lambda=k^2\pi^2$ $A$ and $D$ are zero matrices, thus the rank of $M$ is $2(|V|-1)$, and its nullspace is exactly $(|E|-|V|+2)$-dimensional. Then consider the eigenvalue problem $-y''+tq_j(x)y=\lambda y$ with the boundary conditions (\ref{continuity})-(\ref{Kirchhoff}) for $t\in[0,1]$. The corresponding operator $T(t)$ forms a self-adjoint holomorphic family and its eigenvalues $\lambda_n(t)$ and normalized eigenfunctions $g_n(t)$ can be represented by holomorphic functions of $t$ (see Example VII-3.5 and Theorem VII-3.9 in \cite{K}).
\begin{align}
\lambda_n'(t)=\langle g_n(t),T'(t)g_n(t)\rangle=\int_Gg_n^2(t)q 
\end{align}
is bounded by $\|g_n\|_{\infty}^2\|q\|_{1}=O({\|q\|_{1}})$ (see (\ref{eigfun}) and the paragraph below it). Hence $|\lambda_n(1)-\lambda_n(0)|=O(1)$ and the total multiplicity of eigenvalues $\lambda=k^2\pi^2+O(1)$ is the same for all $q\in L^1$.
\qed

\noindent{\bf Proof of Theorem \ref{equallengthinverse}.}

According to Theorem \ref{equallengthdirect},
\begin{align}
\int_Gq=\sum_{e_j\in G}\int_0^{1}q_j=0. 
\end{align}
Let us denote the operator of the eigenvalue problem (\ref{sch})-(\ref{Kirchhoff}) by $L$. $\langle \varphi,L\varphi\rangle\ge\lambda_0=0$ and equality holds if and only if $\varphi$ is an eigenfunction of $L$. It follows that the constant $1$ must be an eigenfunction corresponding to the eigenvalue $0$. Substituting this to (\ref{sch}) gives $q(x)=0$.
\qed

\noindent{\bf Proof of Theorem \ref{equallengthequalresistances}.}

The equality of the effective resistances $r<1$ implies that the number of spanning trees not containing a fixed edge is the same nonzero integer for every non-loop edge.
Suppose that $\sum_{e_j\in G}\int_0^{1}q_j\ne0$. Then by (\ref{eigpol})
\begin{align}
\prod_{e_j\textrm{is a loop}}(d-\int_0^{1}q_j)\sum_{\tau}\prod_{\substack{e_j\textrm{is not a loop}\\e_j\notin\tau}}(d-\int_0^{1}q_j)=\sum_{\tau}d^{|E|-|V|+1},
\end{align}
as loops are not in spanning trees. Hence if $e_j$ is a loop then $\int_0^{1}q_j=0$, and the sum of the roots of the second factor must also be zero. This is $(1-r)\sum_{e_j\textrm{is not a loop}}\int_0^{1}q_j$ multiplied by the number of spanning trees. Thus $\sum_{e_j\in G}\int_0^{1}q_j=0$ and we can proceed as in the proof of Theorem \ref{equallengthinverse}.
\qed

\end{section}

\bibliographystyle{amsplain}

\end{document}